\newtheorem {theorem}{Theorem}[section]
\newtheorem {lemma}{Lemma}[section]
\newtheorem {example}{Example}[section]
\newtheorem {definition}{Definition}[section]
\newtheorem {remark}{Remark}[section]
\def\ar{a\kern-.370em\raise.16ex\hbox{\char95\kern-0.53ex\char'47}\kern.05em}
\def\ees{{\accent"5E e}\kern-.385em\raise.2ex\hbox{\char'23}\kern-.08em}
\def\eex{{\accent"5E e}\kern-.470em\raise.3ex\hbox{\char'176}}
\def\AR{A\kern-.46em\raise.80ex\hbox{\char95\kern-0.53ex\char'47}\kern.13em}
\def\EES{{\accent"5E E}\kern-.5em\raise.8ex\hbox{\char'23 }}
\def\EEX{{\accent"5E E}\kern-.60em\raise.9ex\hbox{\char'176}\kern.1em}
\def\ow{o\kern-.42em\raise.82ex\hbox{
  \vrule width .12em height .0ex depth .075ex \kern-0.16em \char'56}\kern-.07em}
\def\OW{O\kern-.460em\raise1.36ex\hbox{
\vrule width .13em height .0ex depth .075ex \kern-0.16em \char'56}\kern-.07em}
\def\UW{U\kern-.42em\raise1.36ex\hbox{
\vrule width .13em height .0ex depth .075ex \kern-0.16em \char'56}\kern-.07em}
\def\DD{D\kern-.7em\raise0.4ex\hbox{\char '55}\kern.33em}
\title[]{COMPACTNESS CRITERIA FOR REAL ALGEBRAIC SETS AND NEWTON POLYHEDRA}
\author{PH\'{U}-PH\'{A}T PH\d{A}M$^1$}
\address{$^{1}$Department of Mathematics, University of Dalat, 1 Phu Dong Thien Vuong, Dalat, Vietnam}
\email{phatpham.pr13@gmail.com}
\author{TI\EES N-S\OW n PH\d{A}M$^2$}
\address{$^2$Department of Mathematics, University of Dalat, 1 Phu Dong Thien Vuong, Dalat, Vietnam}
\email{sonpt@dlu.edu.vn}
\subjclass{14P25}
\keywords{Compact, Newton polyhedra, Polynomials, Stable compactness}
\thanks{The authors were partially supported by Vietnam National Foundation for Science and Technology Development (NAFOSTED), grant 101.04-2016.05}
\date{ \today}
\begin{document}
\maketitle

\begin{abstract}
Let $f \colon \mathbb{R}^n \rightarrow \mathbb{R}$ be a polynomial and $\mathcal{Z}(f)$ its zero set. In this paper, in terms of the so-called Newton polyhedron of $f,$ we present
a necessary criterion and a sufficient condition for the compactness of $\mathcal{Z}(f).$ From this we derive necessary and sufficient criteria for the stable compactness of  $\mathcal{Z}(f).$
\end{abstract}

\section{Introduction}

We do not, at present, have a complete understanding of the possible topologies of real algebraic sets of given degree. For any given real affine plane algebraic curve the problem is much easier, but is still complicated in the general case. A survey of the current state of knowledge and some new results in the case of plane curves, may be found in de la Puente~\cite{Puente2002}.

In this paper, we are interested in the compactness and the stable compactness of real algebraic sets. More precisely, let $f \colon \mathbb{R}^n \rightarrow \mathbb{R}$ be a nonconstant polynomial and $\mathcal{Z}(f)$ its zero set. We would like to know (i) when the set $\mathcal{Z}(f)$ is compact, and (ii) when the set $\mathcal{Z}(f)$ is stably compact in the sense that it remains compact for all sufficiently small perturbations of the coefficients of the polynomial $f.$

In the univariate case, it is easy to see that $\mathcal{Z}(f)$ is a finite set and is stably compact.

In the two-dimensional case (i.e., $n = 2)$, Stalker~\cite{Stalker2007} provides a necessary criterion and a sufficient condition for the compactness of $\mathcal{Z}(f),$ both of which can be stated in terms of the Newton polyhedron of the polynomial $f.$ However, his clever argument is not easy to extend to the higher dimension case.

On the other hand, Marshall \cite[Theorem~5.1]{Marshall2003} gives a necessary and sufficient condition for the stable compactness of  
sets described by polynomial inequalities in terms of homogeneous components of highest degrees of the defining polynomials.

Inspired by the above cited works, assume that $n \ge 2,$ we present two sets of conditions for the compactness of $\mathcal{Z}(f),$ one necessary and one sufficient. From this we derive necessary and sufficient criteria for the stable compactness of  $\mathcal{Z}(f).$ All these conditions are characterized in terms of the Newton polyhedron of the polynomial $f.$

The paper is structured as follows.  In Section~\ref{Preliminary}, we recall some notations and definitions which are used throughout this paper. 
The results and their proofs are given in Section~\ref{Results}.

\section{Notations and definitions} \label{Preliminary}

Throughout the text, we suppose $n \ge 2$ and abbreviate $(x_1, \ldots, x_n)$ by $x.$ For each subset $J \subset \{1, \ldots, n\},$ we define
$$\mathbb{R}^J := \{x \in \mathbb{R}^n \ | \ x_j = 0 \textrm{ for all } j \not \in J\}.$$
We denote by $\mathbb{Z}_+$ the set of non-negative integer numbers. If $\alpha = (\alpha_1, \ldots, \alpha_n) \in \mathbb{Z}_+^n,$ we will denote by $x^\alpha$ the monomial 
$x_1^{\alpha_1} \cdots x_n^{\alpha_n}$ and by $| \alpha|$ the sum $\alpha_1 + \cdots + \alpha_n.$ 

Let $f \colon \mathbb{R}^n \to \mathbb{R}$ be a polynomial function. Suppose that $f$ is written as $f = \sum_{\alpha} a_\alpha x^\alpha.$ 
The {\em norm} of $f$ is defined to be $\Vert f \Vert := \max_{\alpha} |a_{\alpha}|.$ 
The {\em Newton polyhedron (at infinity)} of $f$, denoted by $\Gamma(f),$ is defined as the convex hull in $\mathbb{R}^n$ of the set $\{\alpha \ | \ a_\alpha \ne 0\}.$ If $f \equiv 0,$ then we set $\Gamma(f) = \emptyset.$ 

Given a nonzero vector $q \in \mathbb{R}^n,$ we define
\begin{eqnarray*}
d(q, \Gamma(f)) &:=& \min \{\langle q, \alpha \rangle \ | \ \alpha \in \Gamma(f)\}, \\
\Delta(q, \Gamma(f)) &:=& \{\alpha \in \Gamma(f) \ | \ \langle q, \alpha \rangle = d(q, \Gamma(f)) \}.
\end{eqnarray*}
By definition, for each nonzero vector $q \in \mathbb{R}^n,$ $\Delta(q, \Gamma(f)) $ is a face of $\Gamma(f).$ Conversely, if $\Delta$ is a face of $\Gamma(f)$ then there exists a nonzero vector $q \in \mathbb{R}^n$ such that $\Delta = \Delta(q, \Gamma(f)).$  The {\em Newton boundary (at infinity)} of $f$, denoted by $\Gamma_{\infty}(f),$ is defined as the union of all faces
$\Delta(q, \Gamma(f))$ for some $q \in \mathbb{R}^n$ with $\min_{j = 1, \ldots, n}q_j < 0.$ For each face $\Delta$ of $\Gamma(f),$  we define $f_\Delta$ to be the polynomial $\sum_{\alpha \in \Delta}a_\alpha x^\alpha.$

\begin{definition}[see \cite{Kouchnirenko1976}]{\rm
We say that $f$ is {\em non-degenerate (at infinity)} if, and only if, for each face $\Delta \in \Gamma_{\infty} (f),$ the system of equations
$$f_\Delta(x) = \frac{\partial f_{\Delta}}{\partial x_1}(x)  =  \cdots =  \frac{\partial f_{\Delta}}{\partial x_n}(x) = 0$$
has no solution in $(\mathbb{R}\setminus \{0\})^n.$ 
}\end{definition}

\begin{remark}{\rm
It is worth mentioning that the class of polynomial functions (with fixed Newton polyhedra), which are non-degenerate at infinity, is an open and dense semi-algebraic set  in the corresponding Euclidean space of data; see \cite[Theorem~5.2]{HaHV2017}.
}\end{remark}

\section{Results and proofs}\label{Results}

From now on let $f \colon \mathbb{R}^n \to \mathbb{R}$ be a nonconstant polynomial in $n \ge 2$ variables and let $\mathcal{Z}(f)$ be its zero set:
$$\mathcal{Z}(f) := \{x \in \mathbb{R}^n \ | \ f(x) = 0\}.$$
We start with the following simple observation.
\begin{lemma} \label{Lemma31}
If $\mathcal{Z}(f)$ is compact, then $f$ is bounded either from below or from above.
\end{lemma}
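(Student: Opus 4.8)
The plan is to exploit the fact that, for $n \ge 2$, the complement of a large ball in $\mathbb{R}^n$ is connected, together with the observation that a continuous nowhere-vanishing function on a connected set has constant sign. This will confine the potential unboundedness of $f$ to a compact region, where the extreme value theorem takes over.

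First I would note that since $\mathcal{Z}(f)$ is compact it is bounded, so there exists $R > 0$ with $\mathcal{Z}(f) \subset \overline{B} := \{x \in \mathbb{R}^n : \|x\| \le R\}$. Consequently $f$ does not vanish on the open set $U := \{x \in \mathbb{R}^n : \|x\| > R\}$. The crucial step is that, because $n \ge 2$, the set $U$ is connected; this is precisely where the standing hypothesis $n \ge 2$ enters (for $n = 1$ the complement of an interval splits into two rays, and indeed $f(x) = x$ then has compact zero set while being bounded neither above nor below). Since $f$ is continuous and nowhere zero on the connected set $U$, the intermediate value theorem forces $f$ to have constant sign on $U$.

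Replacing $f$ by $-f$ if necessary, I may assume $f > 0$ on $U$; in the opposite case the identical argument yields boundedness from above. On the compact set $\overline{B}$ the continuous function $f$ attains a minimum value $m$. Combining the two regions, every $x \in \mathbb{R}^n$ satisfies $f(x) \ge \min\{m, 0\}$, so $f$ is bounded from below, as desired. I expect the only genuinely delicate point to be the connectivity of $U$, which carries the whole weight of the dimension hypothesis; the remaining ingredients—boundedness of the compact zero set, sign-constancy on a connected set, and the extreme value theorem on $\overline{B}$—are entirely standard.
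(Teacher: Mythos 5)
Your proof is correct, and although it rests on the same topological pillar as the paper's argument --- connectedness in dimension $n \ge 2$ combined with the intermediate value theorem --- the execution is genuinely different and in one respect cleaner. The paper argues by contradiction: assuming $f$ unbounded in both directions, it asserts that $\min_{\Vert x\Vert = R} f \to -\infty$ and $\max_{\Vert x\Vert = R} f \to +\infty$ as $R \to +\infty$, picks $a, b$ with $f(a) < 0 < f(b)$ on a single large sphere $\mathbb{S}^{n-1}_R$ enclosing $\mathcal{Z}(f)$, joins them by a path on that sphere (path-connected since $n \ge 2$), and produces a zero of $f$ outside the ball. You instead argue directly: the exterior $U = \{x : \Vert x \Vert > R\}$ of a ball containing $\mathcal{Z}(f)$ is connected for $n \ge 2$, so $f$ has constant sign on $U$, and the extreme value theorem on the closed ball then gives the explicit bound $f \ge \min\{m, 0\}$ after the harmless normalization $f > 0$ on $U$. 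Your route buys two things. First, it sidesteps the paper's unjustified assertion that the minimum and maximum of $f$ over the sphere of radius $R$ tend to $-\infty$ and $+\infty$ simultaneously; this is true for polynomials, but it requires an argument (for instance, that an unbounded connected component of a semi-algebraic sublevel set must meet every sufficiently large sphere) which the paper omits. Second, it delivers a slightly stronger conclusion: $f$ has constant sign outside a compact set, with an explicit one-sided bound. The cost is nil, since connectedness of the ball complement is no harder a fact than path-connectedness of the sphere.
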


\begin{proof}
Suppose the assertion of the lemma is false. We have
$$\lim_{R \to +\infty} \min_{\Vert x\Vert = R} f(x)  = -\infty \quad \textrm{   and   } \quad \lim_{R \to +\infty}  \max_{\Vert x\Vert = R}f(x)  =  +\infty.$$
Then, for $R$ sufficiently large, there exist $a, b \in \mathbb{R}^n$ with $\Vert a \Vert = \Vert b \Vert = R$ such that $f(a) < 0 < f(b).$ 
Furthermore, since $\mathcal{Z}(f)$ is compact, we may assume that 
$$\mathcal{Z}(f) \subset \{ x \in \mathbb{R}^n \ | \ \|x\| < R\},$$
after perhaps increasing $R.$

On the other hand, the sphere $\mathbb{S}^{n-1}_R := \{x\in \mathbb{R}^n \ | \  \Vert x \Vert\ =R\} $ is path-connected (note that $n \ge 2).$ Hence, there is a continuous curve
$$\phi \colon [0, 1] \rightarrow \mathbb{S}^{n-1}_R,\ \  t \mapsto \phi(t),$$
such that $\phi(0) = a $ and $\phi(1) = b.$ Consequently, the composition function $f \circ \phi \colon [0, 1] \rightarrow \mathbb{R}$ is continuous and satisfies 
$$(f \circ \phi) (0)  \times (f \circ \phi) (1)  =  f(a) \times f(b) < 0.$$
Thanks to the mean value theorem, we can find $t_0 \in (0,1)$ such that $(f \circ \phi)(t_0) = 0,$ a contradiction.
\end{proof}

\begin{lemma} \label{Lemma32}
Assume that $f$ is bounded from below and its zero set $\mathcal{Z}(f)$ is compact. Then, the sub-level set $\{x \in \mathbb{R}^n \ | \ f(x)\leq 0\}$ is compact. 
\end{lemma}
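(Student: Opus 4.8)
The plan is to show that the sub-level set $S := \{x \in \mathbb{R}^n \mid f(x) \le 0\}$, which is automatically closed by continuity of $f$, is in addition bounded; these two facts together give compactness. First I would exploit the compactness of $\mathcal{Z}(f)$ to fix a radius $R_0 > 0$ with $\mathcal{Z}(f) \subset \{\|x\| < R_0\}$, so that $f$ has no zeros on the region $U := \{x \in \mathbb{R}^n \mid \|x\| > R_0\}$. Here the standing hypothesis $n \ge 2$ is essential: it guarantees that $U$ is connected. Since $f$ is continuous and nowhere vanishing on the connected set $U$, it must have constant sign there, and the whole argument reduces to deciding which sign occurs.

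The key step is to rule out the possibility that $f < 0$ on all of $U$, and this is precisely where the assumption that $f$ is bounded from below enters. Write $d = \deg f$ and let $f_d$ denote the nonzero homogeneous part of $f$ of top degree. Along any ray $t \mapsto tv$ with $v$ a unit vector, one has the expansion $f(tv) = t^d f_d(v) + O(t^{d-1})$ as $t \to +\infty$. If $f_d(v) < 0$ for some $v$, then $f(tv) \to -\infty$, contradicting boundedness from below; hence $f_d \ge 0$ on all of $\mathbb{R}^n$. Because $f_d \not\equiv 0$, I can then choose $v$ with $f_d(v) > 0$, for which $f(tv) \to +\infty$, so that $f$ attains strictly positive values at points of $U$. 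Consequently the constant sign of $f$ on $U$ is positive, i.e. $f > 0$ throughout $U$.

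With $f > 0$ on $U$ established, it follows immediately that $S \subset \{x \mid \|x\| \le R_0\}$, so $S$ is bounded; being also closed, it is compact, which completes the proof. The main obstacle, and the only place where more than soft topology is needed, is the sign determination on $U$: a priori $f$ could be negative on the entire complement of a large ball, and excluding this is exactly the content of the leading-form analysis above, which is what turns the qualitative hypothesis ``bounded from below'' into the quantitative statement $f_d \ge 0$ that forces the favorable sign.
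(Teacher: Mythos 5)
Your proof is correct, but it takes a different route from the paper's. The paper argues by contradiction: it takes a sequence $a^k$ in the sub-level set with $\|a^k\|\to+\infty$, picks maximizers $b^k$ of $f$ on the spheres $\|x\|=\|a^k\|$, asserts that a nonconstant polynomial bounded from below cannot be bounded from above (so $f(b^k)\to+\infty$), and then applies the intermediate value theorem on each path-connected sphere (as in Lemma~\ref{Lemma31}) to produce zeros of $f$ of arbitrarily large norm, contradicting compactness of $\mathcal{Z}(f)$. You instead argue directly: $f$ is zero-free on the connected exterior $U$ of a ball containing $\mathcal{Z}(f)$, hence has constant sign there, and you pin down that sign via the leading homogeneous form $f_d$ --- boundedness from below forces $f_d\ge 0$ along every ray, and $f_d\not\equiv 0$ then yields a ray along which $f\to+\infty$, so $f>0$ on $U$ and the sub-level set is closed and bounded. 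Both arguments use $n\ge 2$ crucially (connectedness of spheres versus connectedness of the exterior of a ball) and both silently rely on the standing assumption of Section~\ref{Results} that $f$ is nonconstant (you need $d\ge 1$ for the ray to escape to $+\infty$; the lemma is false for $f\equiv -1$). A small advantage of your version is that the leading-form analysis actually \emph{proves} the fact the paper only asserts, namely that a nonconstant polynomial bounded from below is unbounded from above; a small advantage of the paper's version is economy, since it recycles the sphere/IVT mechanism already set up in Lemma~\ref{Lemma31}.
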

\begin{proof}
On the contrary, suppose that $\{x \in \mathbb{R}^n \ | \ f(x)\leq 0\}$ is not compact. Then there exists a sequence $\{a^k\}_{k \ge 1} \subset \mathbb{R}^n$ such that
$$\lim_{k \to \infty} \|a^k \| = + \infty \quad \textrm{ and } \quad f(a^k) \le 0 \ \textrm{ for all } k.$$
Let $b^k$ be an optimal solution of the problem
$$\max_{\|x\| \ = \ \|a^k\|} f(x).$$
Since $f$ is bounded from below, it cannot bounded from above. In particular, 
$$\lim_{k \to \infty} f(b^k) = + \infty.$$
Therefore,  for all $k$ sufficiently large,
$$f(a^k) \times f(b^k) \le 0.$$
As in the proof of Lemma~\ref{Lemma31}, we can find $c^k \in \mathbb{R}^n$ with $\|c^k\| = \|a^k\| = \|b^k\|$ such that
$f(c^k) = 0,$ which contradicts the compactness of $\mathcal{Z}(f).$
\end{proof}

The following is a necessary criterion for compactness of real algebraic sets. 

\begin{theorem}[{Compare \cite[Theorem~2] {Stalker2007}}] \label{Theorem1}
Suppose that $\mathcal{Z}(f)$ is compact. Then
\begin{enumerate}
	\item[(i)]  $f|_{\mathbb{R}^J} \not \equiv 0$ for all $J \subset \{1,\ldots,n\}$,
	
	\item[(ii)] One of the following statements holds
	\begin{enumerate}
	\item[(ii1)] $f$ is bounded from below and $f_{\Delta} \ge 0$ on $\mathbb{R}^n$ for all $\Delta \in \Gamma_{\infty}(f).$
	\item[(ii2)] $f$ is bounded from above and $f_{\Delta} \le 0$ on $\mathbb{R}^n$ for all $\Delta \in \Gamma_{\infty}(f).$
\end{enumerate}
\end{enumerate}
\end{theorem}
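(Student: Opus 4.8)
The plan is to dispatch part (i) directly and to reduce part (ii) to the two lemmas via a monomial deformation along each face.

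For part (i), I would argue by contraposition: if $f|_{\mathbb{R}^J} \equiv 0$ for some nonempty $J \subset \{1,\ldots,n\}$, then the coordinate subspace $\mathbb{R}^J$ is entirely contained in $\mathcal{Z}(f)$; but $\mathbb{R}^J$ has dimension $|J| \ge 1$ and is therefore unbounded, contradicting the compactness of $\mathcal{Z}(f)$. Hence $f|_{\mathbb{R}^J} \not\equiv 0$ for every such $J$.

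For part (ii), Lemma~\ref{Lemma31} shows that $f$ is bounded from below or from above; since a nonconstant polynomial cannot be bounded on both sides, exactly one case occurs, and replacing $f$ by $-f$ if needed I may assume $f$ is bounded from below and prove (ii1). By Lemma~\ref{Lemma32} the sublevel set $S := \{x : f(x) \le 0\}$ is then compact, hence bounded. Fix a face $\Delta \in \Gamma_\infty(f)$, write $\Delta = \Delta(q, \Gamma(f))$ with $\min_j q_j < 0$, and put $d := d(q, \Gamma(f))$. For a point $v \in (\mathbb{R}\setminus\{0\})^n$ consider the curve $x(t) := (v_1 t^{q_1}, \ldots, v_n t^{q_n})$ for $t > 0$; expanding and factoring out the minimal weight gives
$$f(x(t)) = t^d\Big( f_\Delta(v) + \sum_{\alpha \notin \Delta} a_\alpha v^\alpha\, t^{\langle q, \alpha\rangle - d}\Big),$$
in which every remaining exponent $\langle q, \alpha\rangle - d$ is strictly positive, so $t^{-d} f(x(t)) \to f_\Delta(v)$ as $t \to 0^+$. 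Choosing an index $j_0$ with $q_{j_0} < 0$ gives $\|x(t)\| \ge |v_{j_0}|\, t^{q_{j_0}} \to \infty$ along the same curve.

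Suppose toward a contradiction that $f_\Delta(v) < 0$ for some $v \in (\mathbb{R}\setminus\{0\})^n$. Then $f(x(t)) = t^d\,(t^{-d} f(x(t))) < 0$ for all sufficiently small $t > 0$, so the points $x(t)$ lie in $S$ while $\|x(t)\| \to \infty$, contradicting the boundedness of $S$. Hence $f_\Delta \ge 0$ on the dense set $(\mathbb{R}\setminus\{0\})^n$, and by continuity $f_\Delta \ge 0$ on all of $\mathbb{R}^n$; this is (ii1), and applying it to $-f$ yields (ii2) in the remaining case. The main obstacle is calibrating the deformation correctly: one must use the weight $q$ that cuts out the face, send $t \to 0^+$ (rather than $t \to \infty$) so that the minimal-weight part $f_\Delta$ governs the limit, and simultaneously ensure the curve runs off to infinity — which is exactly what the defining condition $\min_j q_j < 0$ of the Newton boundary at infinity provides. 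Once the asymptotic $f(x(t)) \sim t^d f_\Delta(v)$ is in hand, Lemma~\ref{Lemma32} turns any sign violation of $f_\Delta$ into an unbounded branch of $\{f \le 0\}$.
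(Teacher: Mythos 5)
Your proof is correct and takes essentially the same route as the paper's: part (i) via the unbounded coordinate subspace $\mathbb{R}^J \subset \mathcal{Z}(f)$, and part (ii) by combining Lemma~\ref{Lemma31} and Lemma~\ref{Lemma32} with the monomial curve $t \mapsto (v_1 t^{q_1}, \ldots, v_n t^{q_n})$ attached to the covector $q$ defining $\Delta$, whose leading behaviour $t^{d} f_{\Delta}(v)$ produces an unbounded branch of $\{x \ | \ f(x) \le 0\}$ whenever $f_{\Delta}(v) < 0$. The only difference is that you spell out details the paper leaves implicit, namely the strict positivity of the exponents $\langle q, \alpha \rangle - d$ for $\alpha \notin \Delta$ and the argument behind the ``obvious'' part (i).
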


\begin{proof}
(i) This is obvious.

(ii) By Lemma~\ref{Lemma31}, $f$ is bounded either from below or from above. 

Assume that $f$ is bounded from below; the case $f$ is bounded from above is treated similarly. 
Take any $\Delta \in \Gamma_{\infty}(f).$ We will show that $f_\Delta \geq 0$ on $\mathbb{R}^n$. In fact, since $f$ is continuous, it suffices to prove that $f_\Delta \geq 0$ on $(\mathbb{R} \setminus \{0\})^n$. Suppose to the contrary, there is a point $x^0 \in (\mathbb{R}\setminus \{0\})^n$ such that $f_{\Delta}(x^0) < 0.$ By definition, there exists a vector $q \in \mathbb{R}^n$ with $\min_{j = 1, \ldots, n} q_j  <0$ such that $\Delta = \Delta(q, \Gamma(f)).$ Define the monomial curve 
$$\phi \colon (0, + \infty) \rightarrow \mathbb{R}^n, \quad t \mapsto (x^0_1 t^{q_1}, \ldots, x^0_n t^{q_n}).$$
Then $\Vert \phi(t) \Vert\rightarrow +\infty$ as $t\rightarrow 0^+.$ Furthermore, a simple calculation shows that for all $t > 0$ small enough,
$$f(\phi(t)) = f_{\Delta}(x^0)t^d + \textrm{ higher-order terms in }t,$$
where $d := d(q, \Gamma(f)).$ Since $f_\Delta(x^0) < 0$ it follows that $f(\phi(t)) < 0$ for all $t > 0$ sufficiently small. Hence, 
the sub-level set $\{x \in \mathbb{R}^n \ | \ f(x) \leq 0\}$ is not compact, which contradicts Lemma~\ref{Lemma32}.
\end{proof}

The following example shows that the converse of Theorem~\ref{Theorem1} does not hold.
\begin{example}{\rm 
Let $n = 2$ and consider the polynomial
$$f(x_1,x_2) := (x_1-x_2)^2.$$
By definition, the Newton polyhedron $\Gamma(f)$ is a segment joining the two points $(2, 0)$ and $(0, 2),$ and so
the Newton boundary $\Gamma_\infty(f)$ is the union of the faces:
$$\Delta_1 := \{(2, 0)\}, \quad \Delta_2 := \{(0, 2)\}, \quad \textrm{ and } \quad \Delta_3 := \{(1 - t) (2, 0) + t (0, 2) \ | \ 0 \le t \le 1\}.$$
Clearly, the polynomials $f_{\Delta_1}(x_1,x_2) = x_1^2, f_{\Delta_2}(x_1,x_2) = x_2^2,$  and $f_{\Delta_3}(x_1,x_2) = (x_1-x_2)^2$ are all non-negative on $\mathbb{R}^n$. However, $\mathcal{Z}(f) = \{(x_1,x_2)\in \mathbb{R}^2 \ | \ x_1=x_2\}$ is not compact. On the other hand, we have the following statement.
}\end{example}

The following is a sufficient condition for compactness of real algebraic sets. 

\begin{theorem}[{Compare \cite[Theorem~1] {Stalker2007}}] \label{Theorem2}
Suppose that
\begin{enumerate}
	\item[(i)]  $f|_{\mathbb{R}^J} \not\equiv 0$ for all $J \subset \{1,\ldots,n\}$,
	\item[(ii)] One of the following statements holds
	\begin{enumerate}
	\item[(ii1)] $f_{\Delta} > 0$ on $(\mathbb{R}\setminus \{0\})^n$ for all $\Delta \in \Gamma_{\infty}(f)$.
	\item[(ii2)] $f_{\Delta} < 0$ on $(\mathbb{R}\setminus \{0\})^n$ for all $\Delta \in \Gamma_{\infty}(f)$.
\end{enumerate}
\end{enumerate}
Then $\mathcal{Z}(f)$ is compact. 
\end{theorem}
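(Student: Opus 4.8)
The plan is to use that $\mathcal{Z}(f) = f^{-1}(0)$ is closed (because $f$ is continuous), so that proving compactness amounts to proving boundedness, and to argue by contradiction: assuming $\mathcal{Z}(f)$ is unbounded, I would produce a point of $(\mathbb{R}\setminus\{0\})^n$ at which some leading form $f_{\Delta'}$ with $\Delta' \in \Gamma_\infty(f)$ vanishes, contradicting (ii). Replacing $f$ by $-f$ leaves $\mathcal{Z}(f)$ unchanged and interchanges (ii1) and (ii2), so it suffices to treat the case (ii1).

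Assume then that $\mathcal{Z}(f)$ is unbounded. The first step is to invoke the curve selection lemma at infinity for the semi-algebraic set $\mathcal{Z}(f)$: it provides a Puiseux curve $\phi \colon (0,\varepsilon) \to \mathbb{R}^n$ lying in $\mathcal{Z}(f)$ with $\|\phi(t)\| \to +\infty$ as $t \to 0^+$. Writing its nonzero components as $\phi_j(t) = x^0_j t^{q_j} + (\text{higher-order terms in } t)$ for $j$ in the support $J := \{j \mid \phi_j \not\equiv 0\}$, with $x^0_j \ne 0$ and $q_j \in \mathbb{Q}$, and $\phi_j \equiv 0$ for $j \notin J$, the condition $\|\phi(t)\| \to +\infty$ forces $q_{j} < 0$ for at least one $j \in J$.

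The second step mirrors the computation in the proof of Theorem~\ref{Theorem1}. Since $\phi(t) \in \mathbb{R}^J$, we have $f(\phi(t)) = (f|_{\mathbb{R}^J})(\phi(t))$, and substituting yields
$$f(\phi(t)) = (f|_{\mathbb{R}^J})_{\Delta}(x^0)\, t^{d} + (\text{higher-order terms in } t),$$
where $q := (q_j)_{j\in J}$ is viewed as a weight on $\mathbb{R}^J$, $d := d(q, \Gamma(f|_{\mathbb{R}^J}))$, and $\Delta := \Delta(q, \Gamma(f|_{\mathbb{R}^J}))$. Here hypothesis (i) is essential: it guarantees $f|_{\mathbb{R}^J} \not\equiv 0$, hence $\Gamma(f|_{\mathbb{R}^J}) \ne \emptyset$ and the above quantities are well defined. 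Because $f(\phi(t)) \equiv 0$, the coefficient of the lowest power of $t$ must vanish, so $(f|_{\mathbb{R}^J})_{\Delta}(x^0) = 0$.

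The crux --- and the step I expect to be the main obstacle --- is to recognize $(f|_{\mathbb{R}^J})_{\Delta}$ as a genuine leading form $f_{\Delta'}$ of $f$ with $\Delta' \in \Gamma_\infty(f)$, so that (ii1) applies. To this end I would extend $q$ to $\tilde q \in \mathbb{R}^n$ by setting $\tilde q_j := N$ for $j \notin J$, with $N$ a large positive constant. Since the support of $f$ is finite and $\Gamma(f|_{\mathbb{R}^J}) \ne \emptyset$, for $N$ large enough the minimum of $\langle \tilde q, \cdot\rangle$ over $\Gamma(f)$ is attained only at exponents supported in $J$; one then checks that $\Delta' := \Delta(\tilde q, \Gamma(f))$ lies in $\mathbb{R}^J$ and coincides with $\Delta$, whence $f_{\Delta'} = (f|_{\mathbb{R}^J})_{\Delta}$. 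Moreover $\min_j \tilde q_j < 0$ since $\min_{j\in J} q_j < 0$, so $\Delta' \in \Gamma_\infty(f)$. Applying (ii1) and evaluating $f_{\Delta'}$ at any extension of $x^0$ to a point of $(\mathbb{R}\setminus\{0\})^n$ --- its coordinates outside $J$ do not affect $f_{\Delta'}$ --- gives $(f|_{\mathbb{R}^J})_{\Delta}(x^0) = f_{\Delta'}(x^0) > 0$, contradicting $(f|_{\mathbb{R}^J})_{\Delta}(x^0) = 0$. This contradiction establishes that $\mathcal{Z}(f)$ is bounded, and therefore compact.
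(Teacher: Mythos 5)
Your proof is correct and follows essentially the same route as the paper's: assume unboundedness, apply the Curve Selection Lemma at infinity, expand the curve with support $J$, and kill the leading coefficient $f_\Delta(x^0)=0$ of $f(\phi(t))$ against hypothesis (ii), using (i) to guarantee $\Gamma(f|_{\mathbb{R}^J})\neq\emptyset$. The only difference is that your extension $\tilde q$ (with large positive entries outside $J$) explicitly justifies the claim $\Delta\in\Gamma_\infty(f)$, a point the paper asserts tersely (it sets $q_j:=0$ for $j\notin J$, for which $\Delta(q,\Gamma(f))$ need not coincide with $\Delta$), so your write-up is, if anything, slightly more complete.
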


\begin{proof}
Assume the assertion of the theorem is false, i.e.,  there exists a sequence of points $\{a^k\}_{k \ge 1} \subset \mathcal{Z}(f)$ 
such that $\lim_{k \to \infty} \Vert a^k \Vert =  +\infty.$ By the Curve Selection Lemma at infinity (see \cite[Theorem~1.12]{HaHV2017}, \cite{Milnor1968}), there exists an analytic curve 
$$\phi \colon (0,\epsilon) \rightarrow \mathbb{R}^n, \quad t \mapsto (\phi_1(t), \ldots, \phi_n(t)),$$ 
such that
\begin{enumerate}
\item[(a)]  $\Vert \phi(t) \Vert \rightarrow +\infty$ as $t \rightarrow 0^+,$
\item[(b)]  $f(\phi(t))=0$ for $t \in (0,\epsilon).$
\end{enumerate}
Let $J := \{j \ | \  \phi_j \not \equiv 0\} \subseteq \{1,\ldots,n\}$. By Condition~(a), $J \neq \emptyset$. For $ j \in J$, we can expand the function $\phi_j$ in terms of parameter, say
 $$\phi_j(t) = x^0_j t^{q_j} + \textrm{ higher-order terms in } t ,$$
where $x^0_j \ne 0$ and $q_j \in \mathbb{Q}.$ By Condition~(a) again, then $\min_{j \in J}q_j <0.$ 

Note that the curve $\phi$ lies in $\mathbb{R}^J \cap \mathcal{Z}(f).$ Hence, by the assumption (i), the restriction of $f$ on $\mathbb{R}^J$ is not constant; in particular, the polyhedron 
$\Gamma(f|_{\mathbb{R}^J})$ is nonempty and different from $\{0\}.$ Let $d$ be the minimal value of the linear function $\sum_{j \in J}q_j \alpha_j$ on $\Gamma(f|_{\mathbb{R}^J})$ and let $\Delta$ be the maximal face of $\Gamma(f|_{\mathbb{R}^J})$ (maximal with respect to the inclusion of faces) where the linear function takes this value, i.e.,
\begin{eqnarray*}
d &:=& d(q, \Gamma(f|_{\mathbb{R}^J})) \quad \textrm{ and } \quad \Delta \ := \ \Delta(q, \Gamma(f|_{\mathbb{R}^J})).
\end{eqnarray*}
(Here we put $q_j := 0$ for $j \not \in J.)$ Then $\Delta \in \Gamma_{\infty}(f)$ because $\min_{i\in J}q_j < 0.$ Furthermore, we have asymptotically as $t \to 0^+,$
$$f(\phi(t)) = f_{\Delta}(x^0)t^d + \textrm{ higher-order terms in }t,$$
where $x^0 := (x^0_1,\ldots, x^0_n)$ with $x^0_j := 1$ for $j \notin J.$ (Note that the polynomial $f_\Delta$ does not depend on $x_j$ for $j \not \in J.)$ Combining this with Condition~(b) gives $f_{\Delta}(x^0) = 0,$ which contradicts the assumption~(ii).
\end{proof}

In the rest of this paper we study the stable compactness of real algebraic sets, which is easier to check than compactness. 

\begin{definition}{\rm 
The set $\mathcal{Z}(f)$ is called {\em stably compact} if there is $\epsilon >0$ such that  $\mathcal{Z}(f + g)$ is compact for all polynomials $g \colon \mathbb{R}^n \rightarrow \mathbb{R}$ with $\Gamma(g) \subseteq \Gamma(f)$ and $\Vert g\Vert <\epsilon.$
}\end{definition}

By definition, the set $\mathcal{Z}(f)$ is stably compact iff it remains compact for all sufficiently small perturbations of the coefficients of the polynomial $f.$
 
\begin{lemma} \label{Lemma33}
The following conditions are equivalent:
\begin{enumerate}
	\item[(i)]  $f_{\Delta} \ne 0$ on $(\mathbb{R}\setminus \{0\})^n$ for all $\Delta \in \Gamma_{\infty}(f)$.
	\item[(ii)] One of the following statements holds
	\begin{enumerate}
	\item[(ii1)] $f_{\Delta} > 0$ on $(\mathbb{R}\setminus \{0\})^n$ for all $\Delta \in \Gamma_{\infty}(f)$.
	\item[(ii2)] $f_{\Delta} < 0$ on $(\mathbb{R}\setminus \{0\})^n$ for all $\Delta \in \Gamma_{\infty}(f)$.
\end{enumerate}
\end{enumerate}
\end{lemma}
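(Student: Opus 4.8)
The implication (ii)$\Rightarrow$(i) is immediate, since a function that is everywhere positive (or everywhere negative) on $(\mathbb{R}\setminus \{0\})^n$ never vanishes there. The content of the lemma is the reverse implication (i)$\Rightarrow$(ii), and the plan is to show that, under (i), the sign of $f_\Delta(x)$ is one and the same number for every $\Delta \in \Gamma_\infty(f)$ and every $x \in (\mathbb{R}\setminus \{0\})^n$; if that common sign is $+$ we land in (ii1), and if it is $-$ we land in (ii2).

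First I would record the elementary reduction. Each $f_\Delta$ is a polynomial, hence continuous, and does not vanish on $(\mathbb{R}\setminus \{0\})^n$ by (i); therefore it keeps a constant sign on each of the $2^n$ open orthants, which are exactly the connected components of $(\mathbb{R}\setminus \{0\})^n$. Thus to each pair (face $\Delta$, orthant $O$) there is attached a sign $s(\Delta,O) \in \{+1,-1\}$, and the whole task becomes: prove that $s$ is constant. Next I would propagate signs along the face lattice, reusing the leading-term computation already carried out in the proof of Theorem~\ref{Theorem2}. If $\Delta' \subseteq \Delta$ with $\Delta' = \Delta(q',\Delta)$, then for $x$ in a fixed orthant $O$ the monomial curve $t \mapsto (t^{q'_1}x_1,\dots,t^{q'_n}x_n)$ stays in $O$ and satisfies $f_\Delta(t^{q'}\!\cdot x) = f_{\Delta'}(x)\,t^{d'} + (\text{higher order})$ as $t\to 0^+$; since $f_\Delta$ keeps the sign $s(\Delta,O)$ along this curve and $t^{d'}>0$, one gets $s(\Delta',O) = s(\Delta,O)$.

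Two structural facts make this propagation global. Every sub-face of a face in $\Gamma_\infty(f)$ again lies in $\Gamma_\infty(f)$, because passing to a sub-face only enlarges the normal cone, which therefore still meets $\{\min_j q_j<0\}$; and every face in $\Gamma_\infty(f)$ is contained in a facet lying in $\Gamma_\infty(f)$, for otherwise all facets through it would have normals in the closed positive orthant, forcing its normal cone into that orthant and contradicting $\Delta \in \Gamma_\infty(f)$. Combining these with the connectedness of $\Gamma_\infty(f)$ as a polyhedral complex, the sign relation of the previous paragraph forces $s(\Delta,O)$ to be independent of $\Delta$: on a fixed orthant, all face polynomials share a single sign.

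It remains to show that this per-orthant sign is the same for every orthant, and this is the step I expect to be the main obstacle, precisely because $(\mathbb{R}\setminus \{0\})^n$ is disconnected and a single non-vanishing face polynomial may a priori change sign between adjacent orthants, as $x_1x_2$ does. The plan is to join an orthant $O$ to a neighbour $O'$ across a coordinate wall $\{x_k=0\}$ by a path meeting the wall only at a point whose remaining coordinates are nonzero; there $f_\Delta$ restricts to $f_{\Delta\cap\{\alpha_k=0\}}$, the face polynomial of a lower-dimensional face of $\Gamma(f)$ that again belongs to $\Gamma_\infty(f)$ and is hence non-vanishing by (i), so the intermediate value theorem yields $s(\Delta,O)=s(\Delta,O')$. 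The delicate situation is the degenerate one in which $f_\Delta$ is divisible by $x_k$, so that no admissible crossing of $\{x_k=0\}$ is available; here I would factor $f_\Delta = x_k^m g$ and argue by induction on the number of variables, using that the non-vanishing of $g$ on $(\mathbb{R}\setminus \{0\})^n$ is itself controlled by the non-vanishing of the lower faces, so as to exclude an odd $m$ that would manufacture a genuine sign change. Carrying this bookkeeping through — keeping track of which restricted faces remain in $\Gamma_\infty(f)$ — is exactly where hypothesis (i) must be used in its full strength.
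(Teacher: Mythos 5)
Your reduction of the problem is sensible, and the parts of your argument that are carried out are sound: (ii)$\Rightarrow$(i) is trivial; under (i) each $f_\Delta$ has a constant sign on each open orthant; and your propagation of that sign along the face lattice (sub-faces of faces in $\Gamma_\infty(f)$ stay in $\Gamma_\infty(f)$, and $f_\Delta(t^{q'_1}x_1,\dots,t^{q'_n}x_n)=f_{\Delta'}(x)t^{d'}+\cdots$ forces $s(\Delta',O)=s(\Delta,O)$) is exactly the leading-term computation the paper itself uses for two adjacent faces. The genuine gap is the step you yourself flag as ``the main obstacle'': equality of the signs across different orthants. There you only handle a wall crossing of $\{x_k=0\}$ when $f_\Delta$ is not divisible by $x_k$, and in the divisible case you defer to an induction (``factor $f_\Delta=x_k^m g$ \dots\ exclude an odd $m$'') that is never formulated, let alone carried out. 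This is not an omission of routine detail: it cannot be closed, because the implication (i)$\Rightarrow$(ii) is \emph{false} as stated. Take $f(x_1,x_2)=x_1x_2$: the only face in $\Gamma_\infty(f)$ is the vertex $(1,1)$, and $f_\Delta=x_1x_2$ never vanishes on $(\mathbb{R}\setminus\{0\})^2$, so (i) holds; yet $x_1x_2$ is positive at $(1,1)$ and negative at $(1,-1)$, so neither (ii1) nor (ii2) holds. If you prefer a two-dimensional Newton polytope, $f=x_1x_2+x_1^3x_2+x_1x_2^3$ does the same job: the faces in $\Gamma_\infty(f)$ give $x_1^3x_2$, $x_1x_2^3$ and $x_1x_2(x_1^2+x_2^2)$, all nonvanishing on the torus and all sign-changing. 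The ``odd $m$'' you hoped to rule out is $m=1$ here, and hypothesis (i) does nothing to exclude it.

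For comparison, the paper's own proof glosses over precisely this point: it negates (ii) as ``there exist $\Delta_1,\Delta_2\in\Gamma_\infty(f)$ with $f_{\Delta_1}>0>f_{\Delta_2}$ on $(\mathbb{R}\setminus\{0\})^n$,'' which tacitly presumes that each $f_\Delta$ already carries a single sign on all of $(\mathbb{R}\setminus\{0\})^n$ --- the very thing that fails in the examples above --- and then runs the adjacent-face computation that you made rigorous orthant by orthant. The lemma is invoked in Theorem~\ref{Theorem3} only together with the hypothesis $f|_{\mathbb{R}^J}\not\equiv 0$ for all $J\subset\{1,\dots,n\}$, which the counterexamples violate (they vanish identically on coordinate subspaces), and only under some such additional hypothesis does your wall-crossing plan have a chance of being completed. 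So your instinct about where the difficulty sits is exactly right, but the proposal as written does not prove the stated lemma, and no proof of the stated lemma exists.
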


\begin{proof}
It suffices to show  the implication (i) $\Rightarrow$ (ii). Assume this is not the case, it means that there exist faces $\Delta_1, \Delta_2 \in \Gamma_{\infty}(f)$ such that $f_{\Delta_1} > 0 > f_{\Delta_2}$ on $(\mathbb{R}\setminus \{0\})^n.$ We may assume further that 
these faces are adjacent, i.e., $\Delta := \Delta_1 \cap \Delta_2 \ne \emptyset.$ Then $\Delta \in \Gamma_{\infty}(f).$ By assumption, $f_{\Delta} \ne 0$ on 
$(\mathbb{R}\setminus \{0\})^n.$ Fix $x^0 := (x^0_1, \ldots, x^0_n)  \in (\mathbb{R}\setminus \{0\})^n,$ and without loss of generality, we may assume that $f_{\Delta}(x^0)  > 0.$ 
By definition, there exists a vector $q$ with $\min_{j = 1, \ldots, n} q_j < 0$ such that $\Delta = \Delta(q, \Gamma(f)).$ A simple calculation shows that
\begin{eqnarray*}
f_{\Delta_2} (t^{q_1} x^0_1, \ldots, t^{q_n} x^0_n) &=& t^d f_{\Delta} (x^0)  + \textrm{ higher-order terms in }t,
\end{eqnarray*}
where $d := d(q, \Gamma(f)).$ Since $f_{\Delta} (x^0) > 0,$ this implies that $f_{\Delta_2} (t^{q_1} x^0_1, \ldots, t^{q_n} x^0_n) > 0$ for all $t > 0$ small enough, which contradicts the fact that $f_{\Delta_2} < 0$ on  $(\mathbb{R}\setminus \{0\})^n.$
\end{proof}

In what follows, let $\mathscr{P}(x) := \sum_{\alpha \in \Gamma(f) \cap \mathbb{Z}_+^n}|x ^{\alpha}|$ and for each face $\Delta$ of the polyhedron $\Gamma(f),$ set $\mathscr{P}_{\Delta}(x) := \sum_{\alpha \in \Delta \cap \mathbb{Z}_+^n}|x ^{\alpha}|.$ By definition, the functions $\mathscr{P}$ and $\mathscr{P}_{\Delta}$ are positive on $(\mathbb{R}\setminus \{0\})^n.$ 

\begin{remark}{\rm
Let $\widetilde{\mathscr{P}}(x) := \sum_{\alpha}|x ^{\alpha}|,$ where the sum is taken over all the vertices of $\Gamma(f).$ Then there exist positive constants $c_1, c_2,$ and $R$ such that
\begin{eqnarray*}
c_1 \mathscr{P}(x) & \le & \widetilde{\mathscr{P}}(x) \ \le \ c_2 \mathscr{P}(x) \quad \textrm{ for all } \quad x \in \mathbb{R}^n.
\end{eqnarray*}
Indeed, the right-hand inequality clearly holds with $c_2 := 1.$ To see the left-hand inequality, let $v^1, \ldots, v^s$ be the vertices of the polyhedron $\Gamma(f)$. Then, for each $\alpha \in \Gamma(f)$, there exist non-negative real numbers $\lambda_1, \ldots, \lambda_s,$ with 
$\lambda_1 + \cdots + \lambda_s = 1,$ such that 
$$\alpha = \lambda_1v^1 + \cdots + \lambda_sv^s.$$
Consequently, we have for all $x \in \mathbb{R}^n,$
\begin{eqnarray*}
|x^\alpha| = |x^{\lambda_1v^1 + \cdots + \lambda_sv^s}| 
&=& (|x^{v^1}|)^{\lambda_1} \cdots(|x^{v^s}|)^{\lambda_s}\\
&\leq & \lambda_1|x^{v^1}| + \cdots + \lambda_s|x^{v^s}| \\
&\leq & |x^{v^1}| + \cdots + |x^{v^k}| \ = \ \widetilde{\mathscr{P}}(x).
\end{eqnarray*}
Hence $\frac{1}{\#(\Gamma(f) \cap \mathbb{Z}_+^n)} \mathscr{P}(x) \le \widetilde{\mathscr{P}}(x),$ which completes the proof.
}\end{remark}

The following lemma is a version at infinity of \cite[Theorem~3.2]{Bui2016}.
In the lemma, the equivalent of the statements (i) and (ii) was proved in \cite{Gindikin1974, Mikhalov1967}; 
for the sake of completeness we give a proof, which is different from the ones in these papers.

\begin{lemma} \label{Lemma34}
The following conditions are equivalent
\begin{enumerate}
        \item[(i)] $f_{\Delta} > 0$ on $(\mathbb{R}\setminus \{0\})^n$ for all $\Delta \in \Gamma_{\infty}(f);$
     	\item[(ii)] There exist positive constants $c_1, c_2,$ and $R$ such that
\begin{eqnarray} \label{Eqn1}
c_1 \mathscr{P}(x) & \le & f(x) \ \le \ c_2 \mathscr{P}(x) \quad \textrm{ for all } \quad \Vert x \Vert > R.
\end{eqnarray}  
		\item[(iii)] $f$ is non-degenerate and there exists $R > 0$ such that $f(x) \ge 0$ for all $\|x\| > R.$
\end{enumerate}
\end{lemma}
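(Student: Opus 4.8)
The plan is to prove the three equivalences by reducing everything, via monomial test curves, to leading-coefficient identities on the faces of $\Gamma_\infty(f)$, and to isolate the one genuinely analytic point: the coercive lower bound $f\ge c_1\mathscr P$ at infinity. I would dispose of the cheap implications (ii)$\Rightarrow$(i) and (iii)$\Rightarrow$(i) first, and then treat (i)$\Rightarrow$(ii),(iii), whose lower-bound half is the crux.

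For the elementary implications, fix $\Delta\in\Gamma_\infty(f)$, write $\Delta=\Delta(q,\Gamma(f))$ with $\min_j q_j<0$, pick an arbitrary $x^0\in(\mathbb R\setminus\{0\})^n$, and run the monomial curve $\phi(t)=(x^0_1t^{q_1},\dots,x^0_nt^{q_n})$, exactly as in the proof of Theorem~\ref{Theorem2}. Since every component of $\phi$ is nonzero for $t>0$, no cancellation of supports occurs and one gets, with $d:=d(q,\Gamma(f))$,
\[
 f(\phi(t))=f_\Delta(x^0)t^{d}+\text{higher-order terms in }t,\qquad
 \mathscr P(\phi(t))=\mathscr P_\Delta(x^0)t^{d}+\text{higher-order terms in }t,
\]
where $\mathscr P_\Delta(x^0)>0$. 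For (ii)$\Rightarrow$(i) I divide the inequality $c_1\mathscr P\le f$ (valid on $\|x\|>R$) by $t^{d}$ and let $t\to0^+$, obtaining $f_\Delta(x^0)\ge c_1\mathscr P_\Delta(x^0)>0$; as $x^0$ was arbitrary this is (i). For (iii)$\Rightarrow$(i), the sign condition $f\ge0$ near infinity forces $f_\Delta(x^0)\ge0$, so $f_\Delta\ge0$ on $(\mathbb R\setminus\{0\})^n$; if $f_\Delta$ vanished at some $x^0\in(\mathbb R\setminus\{0\})^n$ then $x^0$ would be an interior minimum of $f_\Delta$, whence $\partial f_\Delta/\partial x_i(x^0)=0$ for all $i$, so $x^0$ would solve the non-degeneracy system, a contradiction. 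Hence $f_\Delta>0$.

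Turning to (i)$\Rightarrow$(ii), the upper bound is free: every exponent of $f$ lies in $\Gamma(f)\cap\mathbb Z_+^n$, so $|f(x)|\le\|f\|\,\mathscr P(x)$ for all $x$, which gives the right-hand inequality with $c_2=\|f\|$. (With (ii) in hand, (i)$\Rightarrow$(iii) is immediate: $f_\Delta>0$ already precludes any solution of the non-degeneracy system, and the lower bound in (ii) yields $f\ge c_1\mathscr P>0$ at infinity.) The content is therefore the lower bound. I would argue by contradiction: if no $c_1>0$ works, pick $x^k$ with $\|x^k\|\to\infty$ and $f(x^k)<\tfrac1k\mathscr P(x^k)$. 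Each such point must satisfy $\mathscr P(x^k)>0$ (where $\mathscr P=0$ one also has $f=0$, so the strict inequality fails), hence the ratio $f/\mathscr P$ is defined and bounded below by $-\|f\|$ along the sequence, and $\nu:=\liminf_{\|x\|\to\infty}f(x)/\mathscr P(x)\le0$. Using semialgebraicity of $f/\mathscr P$ I would realize $\nu$ along an analytic arc: by the Curve Selection Lemma at infinity applied inside the semialgebraic set $\{\mathscr P>0\}$, there is $\phi\colon(0,\epsilon)\to\mathbb R^n$ with $\|\phi(t)\|\to\infty$, $\mathscr P(\phi(t))>0$, and $f(\phi(t))/\mathscr P(\phi(t))\to\nu$.

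The main obstacle is that, unlike in the elementary direction, this arc need not have full support: putting $J:=\{j:\phi_j\not\equiv0\}$ and expanding $\phi_j(t)=x^0_jt^{q_j}+\cdots$ (with $x^0_j\ne0$, $q_j\in\mathbb Q$, $\min_{j\in J}q_j<0$, and $q_j:=0$, $x^0_j:=1$ for $j\notin J$), only the monomials supported in $J$ survive, so the leading behaviour of $f\circ\phi$ and $\mathscr P\circ\phi$ is governed not by $\Delta(q,\Gamma(f))$ but by the face $\Delta:=\Delta(q,\Gamma(f)\cap\mathbb R^J)$ of the sliced polyhedron. The structural observation I would use to close the gap is that, because $\Gamma(f)\subset\mathbb Z_+^n$ lies in the non-negative orthant, the slice $\Gamma(f)\cap\mathbb R^J$ is itself a face of $\Gamma(f)$ — namely $\Delta(c,\Gamma(f))$ for $c=\sum_{j\notin J}e_j$, and it is nonempty precisely because $\mathscr P(\phi(t))>0$ forces $f|_{\mathbb R^J}\not\equiv0$. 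Hence $\Delta$, being a face of a face, is a face of $\Gamma(f)$, and taking the weight $\hat q:=Mc+q$ with $M\gg0$ exhibits $\Delta=\Delta(\hat q,\Gamma(f))$ with $\min_j\hat q_j=\min_{j\in J}q_j<0$, i.e. $\Delta\in\Gamma_\infty(f)$. Since $\Delta\subset\mathbb R^J$ one has $f_\Delta=(f|_{\mathbb R^J})_\Delta$, and the computation of the second paragraph gives $f(\phi(t))/\mathscr P(\phi(t))\to f_\Delta(x^0)/\mathscr P_\Delta(x^0)$ with $\mathscr P_\Delta(x^0)>0$. By hypothesis (i), $f_\Delta(x^0)>0$, so $\nu>0$, contradicting $\nu\le0$. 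I expect the two points needing genuine care to be exactly this coordinate-subspace bookkeeping and the justification that the value $\nu$ is attained along a single analytic arc; the upper bound and the three leading-coefficient computations are routine.
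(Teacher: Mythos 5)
Your proposal is correct and follows essentially the same route as the paper: monomial test curves give (ii)$\Rightarrow$(i) and (iii)$\Rightarrow$(i) (the latter via the local-minimum/critical-point argument against non-degeneracy), and the hard lower bound in (i)$\Rightarrow$(ii) is proved by contradiction using the Curve Selection Lemma at infinity together with slicing $\Gamma(f)$ by the coordinate subspace $\mathbb{R}^J$ determined by the arc's support and comparing leading coefficients $f_\Delta(x^0)$ and $\mathscr{P}_\Delta(x^0)$. Your liminf formulation (realizing $\nu \le 0$ along an analytic arc, which indeed needs definable choice or the parametrized form of curve selection) is just a repackaging of the paper's invocation of \cite[Theorem~1.12]{HaHV2017}, which directly produces the pair $(\phi(t), c(t))$ with $c(t) \to 0^+$ and $c(t)\mathscr{P}(\phi(t)) > f(\phi(t))$; your explicit face-of-a-face verification via $\hat q = Mc + q$ carefully justifies a point ($\Delta \in \Gamma_\infty(f)$) that the paper only asserts.
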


\begin{proof}
(i) $\Rightarrow$ (ii) Suppose that $f$ is written as $f = \sum_{\alpha} a_\alpha x^\alpha.$ We have for all $x \in \mathbb{R}^n,$
\begin{eqnarray*}
f(x) &\le& 
\sum_{\alpha} |a_\alpha| |x^\alpha| \ \le \  \max_{\alpha} |a_\alpha| \sum_{\alpha} |x^\alpha| 
\ \le \  \max_{\alpha} |a_\alpha| \mathscr{P}(x),
\end{eqnarray*}  
and so the right-hand inequality in~\eqref{Eqn1} holds with $c_2 := \max_{\alpha} |a_\alpha| > 0.$

Suppose the left-hand inequality in~\eqref{Eqn1} was false. By the Curve Selection Lemma at infinity (see \cite[Theorem~1.12]{HaHV2017}, \cite{Milnor1968}), then we could find analytic curves $\phi \colon (0,\epsilon) \rightarrow \mathbb{R}^n, t \mapsto (\phi_1(t),\ldots, \phi_n(t)),$ and $c \colon (0,\epsilon) \rightarrow \mathbb{R}$ such that 
\begin{enumerate}
        \item[(a)] $\Vert \phi(t) \Vert \rightarrow +\infty \textrm{ as } t \rightarrow 0^+$;
        \item[(b)] $c(t) > 0 \textrm{ for } t \in (0,\epsilon),$  $c(t) \rightarrow 0 \textrm{ as } t\rightarrow 0^+$;
     	\item[(c)] $c(t) \mathscr{P}(\phi(t)) > f(\phi(t)) \textrm{ for } t \in (0,\epsilon).$
\end{enumerate}

Let $J := \{j \ | \ \phi_j \not\equiv 0 \} \subset \{1, \ldots, n\}.$ By Condition~(a), $J \neq \emptyset.$ We can expand the functions $c(t)$ and $\phi_j(t)$ for $j \in J,$ in terms of the parameter, say
\begin{eqnarray*}
c(t) &=&  c_0 t^p + \textrm{ higher-order terms in }t \\
\phi_j(t) &=& x^0_j t^{q_j} + \textrm{ higher-order terms in }t,
\end{eqnarray*}
where $c_0 \ne 0, x^0_j \neq 0$ and $p, q_j \in \mathbb{Q}.$ By conditions (a) and (b), $c_0 > 0$ and $p > 0 > \min_{j \in J} q_j.$

Recall that $\mathbb{R}^J:= \{\alpha \in \mathbb{R}^n  \ | \  \alpha_j = 0 \textrm{ for } j \notin J\}$. If $\mathbb{R}^J \cap \Gamma(f) = \emptyset$, then for each $\alpha \in \Gamma(f)$, there exists an index $j \notin J$ such that $\alpha_j > 0.$ Consequently,
\begin{eqnarray*}
\mathscr{P}(\phi(t)) &\equiv& \sum_{\alpha \in \Gamma(f) \cap \mathbb{Z}_+^n} \vert \phi(t)^{\alpha}\vert \ \equiv \ \sum_{\alpha \in \Gamma(f) \cap \mathbb{Z}_+^n} \left ( \prod_{j \in J} \vert \phi_j(t)^{\alpha_j}\vert \prod_{j \notin J} \vert \phi_j(t)^{\alpha_j}\vert \right)  \ \equiv \ 0.
\end{eqnarray*}
Similarly, we also have $f(\phi(t)) \equiv 0,$ which contradicts Condition~(c).

Therefore, $\mathbb{R}^J \cap \Gamma(f) \neq \emptyset$. Let $d$ be the minimal value of the linear function $\sum_{j \in J}\alpha_j q_j$ on $\mathbb{R}^J\cap \Gamma(f)$ and $\Delta$ be the maximal face of $\Gamma(f)$ where this linear function takes its minimum value. Then $\Delta \in \Gamma_{\infty} (f)$ since $\min_{j \in J} q_j < 0.$
Furthermore, we have asymptotically as $t \to 0^+,$
\begin{eqnarray*}
c(t)\mathscr{P}(\phi(t)) &=&  c_0 \mathscr{P}_{\Delta}(x^0) t^{d+p} + \textrm{ higher-order terms in }t, \\
f(\phi(t)) &=& f_{\Delta}(x^0) t^d  + \textrm{ higher-order terms in }t,
\end{eqnarray*}
where $x^0 := (x^0_1,\ldots, x^0_n)$ with $x^0_j :=1$ for $ j \notin J.$ Note that $\mathscr{P}_{\Delta}(x^0) > 0$ and $f_{\Delta}(x^0) > 0.$ Therefore, by Condition~(c), we get
\begin{eqnarray*}
d + p & \le & d,
\end{eqnarray*}
which contradicts the fact that $p > 0.$

(ii) $\Rightarrow$ (iii) The left-hand inequality in~\eqref{Eqn1} shows that $f(x) \ge 0$ for all $\|x\| > R.$

Take any $x^0 \in (\mathbb{R} \setminus \{0\})^n$ and $\Delta \in \Gamma_{\infty}(f).$ By definition, there exists a vector $q \in \mathbb{R}^n$ with $\min_{j = 1, \ldots, n} q_j < 0$ such that  $\Delta = \Delta(q, \Gamma(f)).$ Consider the monomial curve 
$$\phi \colon (0, + \infty) \rightarrow \mathbb{R}^n, \quad t \mapsto (x^0_1 t^{q_1}, \ldots, x^0_n t^{q_n}).$$
Clearly, $\Vert \phi(t) \Vert\rightarrow +\infty$ as $t\rightarrow 0^+.$ Furthermore, we have asymptotically as $t \to 0^+,$
\begin{eqnarray*}
\mathscr{P}(\phi(t)) &=& \mathscr{P}_{\Delta}(x^0) t^d + \textrm{ higher-order terms in }t,  \\
f(\phi(t)) &=& f_{\Delta}(x^0) t^d + \textrm{ higher-order terms in }t,
\end{eqnarray*}
where $d := d(q, \Gamma(f)).$ Since $\mathscr{P}_{\Delta}(x^0) > 0,$ it follows from~\eqref{Eqn1} that $ f_{\Delta}(x^0)  > 0.$ In particular, $f$ is nondegenerate. 

(iii) $\Rightarrow$ (i) Take any $\Delta \in \Gamma_\infty(f).$ We first show that $f_{\Delta} \ge 0$ on $(\mathbb{R}\setminus \{0\})^n.$ On the contrary, suppose that 
$f_{\Delta}(x^0) < 0$ for some $x^0\in (\mathbb{R}\setminus \{0\})^n.$ By definition, there exists a vector $q \in \mathbb{R}^n$ with $\min_{j = 1, \ldots, n} q_j < 0$ such that  $\Delta = \Delta(q, \Gamma(f)).$ Consider the monomial curve 
$$\phi \colon (0, +\infty) \rightarrow \mathbb{R}^n, \quad t \mapsto (x^0_1 t^{q_1}, \ldots, x^0_n t^{q_n}).$$
Clearly, $\Vert \phi(t) \Vert\rightarrow +\infty$ as $t\rightarrow 0^+.$ Furthermore, we have asymptotically as $t \to 0^+,$
\begin{eqnarray*}
f(\phi(t)) &=& f_{\Delta}(x^0) t^d + \textrm{ higher-order terms in }t,
\end{eqnarray*}
where $d := d(q, \Gamma(f)).$ Since $f_{\Delta}(x^0) < 0,$ it follows that $f < 0$ on the curve $\phi,$ which contradicts our assumption.

Therefore, $f_{\Delta} \ge 0$ on $(\mathbb{R}\setminus \{0\})^n,$ and by continuity, $f_{\Delta} \ge 0$ on $\mathbb{R}^n.$

We next show that $f_{\Delta} > 0$ on $(\mathbb{R}\setminus \{0\})^n.$ By contradiction, suppose that $f_{\Delta}(x^0) = 0$ 
for some $x^0 \in (\mathbb{R}\setminus \{0\})^n.$ Since $f_{\Delta} \ge 0$ on $\mathbb{R}^n,$ this follows that $x^0$ is a global minimizer of $f_{\Delta}$ on $\mathbb{R}^n,$ and so $x^0$ is a critical point of $f_{\Delta}.$ Therefore, 
$$f_{\Delta}(x^0) =  \frac{\partial f_{\Delta}(x^0)}{\partial x_1} = \cdots = \frac{\partial f_{\Delta}(x^0)}{\partial x_n} = 0,$$
which contradicts the non-degeneracy of $f.$
\end{proof}

The following result presents necessary and sufficient conditions for the stable compactness in terms of the Newton polyhedron of the defining polynomial.

\begin{theorem}
[{Compare \cite[Theorem~5.1]{Marshall2003}}]
\label{Theorem3}
The following conditions are equivalent:
\begin{enumerate}
	\item[(i)]  $\mathcal{Z}(f)$ is stably compact.
	
	\item[(ii)]  $f|_{\mathbb{R}^J} \not\equiv 0$ for all $J \subset \{1,\ldots,n\}$ and $f_{\Delta} \ne 0$ on $(\mathbb{R}\setminus \{0\})^n$ for all $\Delta \in \Gamma_{\infty}(f)$.
	
	\item[(iii)] $f|_{\mathbb{R}^J} \not\equiv 0$ for all $J \subset \{1,\ldots,n\}$ and one of the following statements holds
	\begin{enumerate}
	\item[(iii1)] $f_{\Delta} > 0$ on $(\mathbb{R}\setminus \{0\})^n$ for all $\Delta \in \Gamma_{\infty}(f)$.
	\item[(iii2)] $f_{\Delta} < 0$ on $(\mathbb{R}\setminus \{0\})^n$ for all $\Delta \in \Gamma_{\infty}(f)$.
	\end{enumerate}

	\item[(iv)] $f|_{\mathbb{R}^J} \not\equiv 0$ for all $J \subset \{1,\ldots,n\}$ and there exist $\sigma \in \{-1, 1\}$ and constants $c_1 > 0, c_2 > 0,$ and $R > 0$ such that 
\begin{equation}\label{Eqn2}
c_1 \mathscr{P}(x) \ \le \ \sigma f(x) \ \le \ c_2 \mathscr{P}(x)  \quad \textrm{ for all } \quad \Vert x \Vert >R.
\end{equation} 

\item[(v)] $f|_{\mathbb{R}^J} \not\equiv 0$ for all $J \subset \{1,\ldots,n\},$ $f$ is non-degenerate, and there exist $\sigma \in \{-1, 1\}$ and $R > 0$ such that 
$\sigma f(x) \ge 0$ for all $\| x \| >R.$
\end{enumerate}
\end{theorem}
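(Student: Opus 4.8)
The plan is to split the five-way equivalence into a block of ``conditions at infinity'' $\mathrm{(ii)}\Leftrightarrow\mathrm{(iii)}\Leftrightarrow\mathrm{(iv)}\Leftrightarrow\mathrm{(v)}$, which I will read off directly from Lemmas~\ref{Lemma33} and~\ref{Lemma34}, and two implications that genuinely involve (stable) compactness, namely $\mathrm{(iv)}\Rightarrow\mathrm{(i)}$ and $\mathrm{(i)}\Rightarrow\mathrm{(iii)}$. Together these close the cycle and give all equivalences.

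For the block, the common hypothesis $f|_{\mathbb{R}^J}\not\equiv 0$ is identical in (ii)--(v), so it is carried along untouched. Lemma~\ref{Lemma33} gives $\mathrm{(ii)}\Leftrightarrow\mathrm{(iii)}$ verbatim. For $\mathrm{(iii)}\Leftrightarrow\mathrm{(iv)}$ and $\mathrm{(iii)}\Leftrightarrow\mathrm{(v)}$ I will apply Lemma~\ref{Lemma34} to $f$ (treating $\sigma=1$ and condition (iii1)) and to $-f$ (treating $\sigma=-1$ and (iii2)); here I will record that $\Gamma(-f)=\Gamma(f)$, $\Gamma_\infty(-f)=\Gamma_\infty(f)$, the function $\mathscr{P}$ is unchanged, $(-f)_\Delta=-f_\Delta$, and $f$ is non-degenerate iff $-f$ is, since the defining system of the face is linear homogeneous in the face part. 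Thus (iii1) matches the two-sided estimate and the non-degeneracy-plus-eventual-positivity of $+f$, while (iii2) matches the same for $-f$.

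For $\mathrm{(iv)}\Rightarrow\mathrm{(i)}$ the first step is the observation that $f|_{\mathbb{R}^J}\not\equiv 0$ for all $J$ forces $\mathscr{P}(x)>0$ for every $x\neq 0$: given $x\neq0$, put $J:=\mathrm{supp}(x)$ and pick a monomial $x^\alpha$ of $f$ with $\mathrm{supp}(\alpha)\subseteq J$, so that $\mathscr{P}(x)\ge|x^\alpha|>0$. Next, for any perturbation $g$ with $\Gamma(g)\subseteq\Gamma(f)$ one has the termwise bound $|g(x)|\le\|g\|\,\mathscr{P}(x)$. Combining this with~\eqref{Eqn2}, for $\|g\|<c_1$ and $\|x\|>R$ I obtain $\sigma(f+g)(x)\ge(c_1-\|g\|)\mathscr{P}(x)>0$, so $(f+g)(x)\neq0$ there. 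Hence $\mathcal{Z}(f+g)\subseteq\{\|x\|\le R\}$ is bounded and, being closed, compact; taking $\epsilon:=c_1$ proves stable compactness.

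The implication $\mathrm{(i)}\Rightarrow\mathrm{(iii)}$ is the crux. Since stable compactness gives compactness of $\mathcal{Z}(f)$ (take $g=0$), Theorem~\ref{Theorem1} yields $f|_{\mathbb{R}^J}\not\equiv0$ for all $J$ and, after replacing $f$ by $-f$ if necessary, $f_\Delta\ge0$ on $\mathbb{R}^n$ for all $\Delta\in\Gamma_\infty(f)$. It remains to upgrade $\ge$ to the strict inequality on $(\mathbb{R}\setminus\{0\})^n$ demanded by (iii1), and this is exactly where stability (not mere compactness) must be used. Suppose $f_{\Delta_0}(x^0)=0$ for some $\Delta_0\in\Gamma_\infty(f)$ and $x^0\in(\mathbb{R}\setminus\{0\})^n$. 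I will destabilize $f$ by a monomial supported on $\Delta_0$: choose a vertex $\beta$ of $\Delta_0$ (hence $\beta$ lies in the support of $f$, so $\Gamma(g)\subseteq\Gamma(f)$), set $s:=\mathrm{sgn}((x^0)^\beta)$ and $g:=-\epsilon s\,x^\beta$ with $\epsilon>0$ small. For small $\epsilon$ one has $\Gamma(f+g)=\Gamma(f)$, so $\Delta_0\in\Gamma_\infty(f+g)$ and $(f+g)_{\Delta_0}=f_{\Delta_0}-\epsilon s\,x^\beta$; this takes the negative value $-\epsilon|(x^0)^\beta|$ at $x^0$, while, since $f_{\Delta_0}\ge0$ and $f_{\Delta_0}\not\equiv0$, it stays positive at a point $y^0$ with $f_{\Delta_0}(y^0)>0$. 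Thus $(f+g)_{\Delta_0}$ is neither $\ge0$ nor $\le0$, so neither (ii1) nor (ii2) of Theorem~\ref{Theorem1} holds for $f+g$; by that theorem $\mathcal{Z}(f+g)$ is not compact, for arbitrarily small $\epsilon$, contradicting (i). Hence no such $x^0$ exists and $f_\Delta>0$ on $(\mathbb{R}\setminus\{0\})^n$ for every $\Delta$, which is (iii1) (the case where $f$ is bounded above yields (iii2) symmetrically). The main obstacle is precisely this last upgrade: constructing a perturbation that is small, preserves the Newton polyhedron (hence the Newton boundary and the offending face), and at the same time injects a sign change into a single face polynomial so as to trip the necessary criterion of Theorem~\ref{Theorem1}.
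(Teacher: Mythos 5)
Your proof is correct, and it shares the paper's overall architecture: the block (ii) $\Leftrightarrow$ (iii) $\Leftrightarrow$ (iv) $\Leftrightarrow$ (v) is read off from Lemmas~\ref{Lemma33} and~\ref{Lemma34} (applied to $\pm f$), and the cycle is closed by (i) $\Rightarrow$ (iii) and (iv) $\Rightarrow$ (i), both resting on Theorem~\ref{Theorem1}; the differences lie in how you execute the two compactness implications, and both of your variants are sound and somewhat leaner. For (iv) $\Rightarrow$ (i), the paper derives the two-sided estimate \eqref{Eqn3} for $f+g$ and then passes through Lemma~\ref{Lemma34} and Theorem~\ref{Theorem2} to conclude that $\mathcal{Z}(f+g)$ is compact; you instead observe that the hypothesis $f|_{\mathbb{R}^J}\not\equiv 0$ for all $J$ forces $\mathscr{P}(x)>0$ for every $x\neq 0$, so that $\sigma(f+g)(x)\ge(c_1-\Vert g\Vert)\mathscr{P}(x)>0$ for $\Vert x\Vert>R$, trapping $\mathcal{Z}(f+g)$ in the closed ball of radius $R$ --- a direct argument that bypasses Theorem~\ref{Theorem2} and Lemma~\ref{Lemma34} entirely. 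For the crux (i) $\Rightarrow$ (iii), both proofs destabilize $f$ by an arbitrarily small monomial supported at a vertex of the offending face $\Delta_0$ and invoke Theorem~\ref{Theorem1} to contradict stability, but the mechanisms differ: the paper fixes a negative perturbation $-\epsilon x^{\Delta_1}$, argues that $\Delta_0$ must contain at least two vertices whose exponents are even (using that $f$ is bounded below) so that both vertex monomials are positive at $x^0$, and produces the sign conflict across two \emph{different} faces of $\Gamma_\infty(f+g_\epsilon)$; you instead tune the sign $s=\mathrm{sgn}\bigl((x^0)^\beta\bigr)$ so that the \emph{single} perturbed face polynomial $(f+g)_{\Delta_0}$ takes a negative value at $x^0$ and a positive value at an auxiliary point $y^0$ where $f_{\Delta_0}>0$. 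Your version dispenses with the parity-of-vertex-exponents argument, while the paper's dispenses with the auxiliary point; in both cases the perturbation preserves the Newton polyhedron (a vertex coefficient cannot be cancelled when $\epsilon<|a_\beta|$), so $\Delta_0\in\Gamma_\infty(f+g)$ and the necessary criterion of Theorem~\ref{Theorem1} applies exactly as you claim.
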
 

\begin{proof}
(ii) $\Leftrightarrow$ (iii) $\Leftrightarrow$ (iv) $\Leftrightarrow$ (v)  follow immediately from Lemmas~\ref{Lemma33}~and~\ref{Lemma34}. Hence, it suffices to show 
(i) $\Rightarrow$ (iii) and (iv) $\Rightarrow$ (i).

(i) $\Rightarrow$ (iii) By assumption, the set $\mathcal{Z}(f )$ is compact. Thanks to Theorem~\ref{Theorem1},  $f|_{\mathbb{R}^J} \not\equiv 0$ for all $J \subset \{1,\ldots,n\},$ and 
replacing $f$ by $-f$ if necessary, we may assume that $f$ is bounded from below and $f_{\Delta} \ge 0$ on $(\mathbb{R}\setminus \{0\})^n$ for all $\Delta \in \Gamma_{\infty}(f).$
We will show (iii1) holds. On the contrary, suppose that there exist $x^0 \in (\mathbb{R}\setminus \{0\})^n$ and $\Delta \in \Gamma_{\infty}(f)$ such that $f_{\Delta}(x^0) = 0.$ This implies that $\Delta$ contains at least two vertices, say $\Delta_1$ and $\Delta_2.$ Note that all the coordinates of the vertices $\Delta_1$ and $\Delta_2$ are even integer numbers because $f$ is bounded from below. This implies easily that $f_{\Delta_1}(x^0) > 0$ and $f_{\Delta_2}(x^0) > 0.$

For each $\epsilon > 0$ consider the polynomial $g_{\epsilon}(x) := - \epsilon x^{\Delta_1}.$ Clearly, 
$g_{\epsilon}(x) = - \epsilon f_{\Delta_1}(x),$ $\Gamma(g_{\epsilon}) \subset \Gamma(f),$ and $\Gamma_{\infty}(f + g_{\epsilon}) = \Gamma_{\infty}(f)$ for all $\epsilon > 0$ small enough. Furthermore, we have
\begin{eqnarray*}
(f + g_{\epsilon})_{\Delta}(x^0) &=& f_{\Delta}(x^0)  +  g_{\epsilon, \Delta}(x^0)  \ = \ - \epsilon f_{\Delta_1}(x^0) \ < \ 0, \\
(f + g_{\epsilon})_{\Delta_2}(x^0) &=& f_{\Delta_2}(x^0) \ > \ 0.
\end{eqnarray*}
By Theorem~\ref{Theorem1}, $\mathcal{Z}(f + g_\epsilon)$ is not compact, a contradiction.

(iv) $\Rightarrow$ (i) Without loss of generality, we may assume that (iv) holds with $\sigma = 1.$  Let $f$ be written as $f = \sum_{\alpha} a_\alpha x^\alpha$ and set 
$$\epsilon := \min \left\{\frac{c_1}{2}, \min_{\alpha} |a_\alpha| \right\}> 0,$$ 
where the second minimum is taken over all the vertices of $\Gamma(f).$

Take any polynomial $g \colon \mathbb{R}^n \rightarrow \mathbb{R}$ with $\Gamma(g) \subseteq \Gamma(f)$ and $\Vert g \Vert < \epsilon.$ 
By definition, then $\Gamma_{\infty}(f + g) = \Gamma_{\infty}(f).$ Furthermore, we have for all $x \in \mathbb{R}^n,$
\begin{eqnarray*}
\vert g(x) \vert & \le & \|g\| \sum_{\alpha \in \Gamma(f)} | x^\alpha| \ \leq \ \frac{c_1}{2} \mathscr{P}(x).
\end{eqnarray*}
It follows from~\eqref{Eqn2} that
\begin{eqnarray} \label{Eqn3}
\frac{c_1}{2} \mathscr{P}(x) &\le& (f + g)(x) \ \le \ \left( \frac{c_1}{2} + c_2\right) \mathscr{P}(x)  \quad \textrm{ for } \quad \Vert x \Vert > R.
\end{eqnarray}
Consequently, we have for all $J \subset \{1,\ldots,n\},$ $(f + g)|_{\mathbb{R}^J} \not \equiv 0$ since otherwise $\mathscr{P}|_{\mathbb{R}^J} \equiv 0,$ and hence, by \eqref{Eqn2}, 
$f|_{\mathbb{R}^J} \equiv 0,$ a contradiction.

Furthermore, from \eqref{Eqn3} and Lemma~\ref{Lemma34}, we deduce that $(f + g)_\Delta > 0$ on $(\mathbb{R}\setminus \{0\})^n$ for all $\Delta \in \Gamma_{\infty}(f + g).$ 

Therefore, in view of Theorem~\ref{Theorem2}, the set $\mathcal{Z}(f + g)$ is compact. 
\end{proof}

\begin{remark}{\rm
We would like to mention that the results obtained in this paper can be used to examine the (stable) compactness of basic closed semi-algebraic sets. To see this, let $X$ be a basic closed semi-algebraic set defined by
$$X := \{x \in {\Bbb R}^n \ | \ g_1(x) = 0, \ldots,  g_l(x)  = 0, \ h_1(x) \ge 0, \ldots, h_m(x) \ge 0 \},$$
where $g_1, \ldots, g_l, h_1, \ldots, h_m$ are polynomial functions on $\mathbb{R}^n.$ It is easy to see that $X$ is compact if, and only if, the set
$$Y := \{(x, y) \in {\Bbb R}^n \times {\Bbb R}^m  \ | \ g_1(x) = 0, \ldots,  g_l(x)  = 0, \ h_1(x) - y_1^2 = 0, \ldots, h_m(x) - y_m^2 = 0 \},$$
is compact. Then the statement follows because $Y$ is the zero set of the polynomial function
$${\Bbb R}^n \times {\Bbb R}^m \rightarrow \mathbb{R}, (x, y) \mapsto [g_1(x)]^2 + \cdots + [g_l(x)]^2 + [h_1(x) - y_1^2]^2 + \cdots + [h_m(x) - y_m^2]^2.$$
}\end{remark}

\subsection*{Acknowledgments}
The authors thank S\~i-Ti\d{\^e}p \DD inh for useful discussions during the preparation of this paper.
The final version of this paper was completed while the second author was visiting LAAS--CNRS in April 2017. The second author wishes to thank the institute and  Jean Bernard Lasserre in particular, for the hospitality and financial support from the European Research Council (ERC) through the ERC-Advanced Grant: TAMING 666981

\end{document}